\newtheorem{theorem}{Theorem}[section]
\newtheorem{lemma}{Lemma}[section]
\newtheorem{proposition}{Proposition}[section]
\theoremstyle{remark}
\newtheorem{remark}{\hskip\parindent\bf Remark}[section]
\theoremstyle{definition}
\numberwithin{equation}{section}
\newcommand{\ba}{\begin{array} }
\newcommand{\ea}{\end{array} }
\newcommand{\be}{\begin{equation} }
\newcommand{\ee}{\end{equation} }
\begin{document}

\title{The Hitting Times of A Stochastic Epidemic Model}
\author{Qingshan Yang\\School of Mathematics and Statistics
                     \\Northeast Normal University, Changchun 130024,
                     China\\
Xuerong Mao\thanks{Correspoinding author. E-mail address:
x.mao@strath.ac.uk. This paper is supported by National Natural Science Funds of China (Grant No. 11401090) }
\\Department of Mathematics and Statistics
\\University of Strathclyde, Glasgow G11XH,
UK
}


\date{}

\maketitle

\begin{abstract}

In this paper, we focus on the hitting times of a stochastic
epidemic model presented by \cite{Gray}. Under the help of the
auxiliary stopping times, we investigate the asymptotic limits of
the hitting times by the variations of calculus and the large
deviation inequalities when the noise is sufficiently small. It can
be shown that the relative position between the initial state and
the hitting state determines the scope of the hitting times greatly.

\end{abstract}


\newcommand{\dx}{\Delta x}
\newcommand{\dt}{\Delta t}

\section{Introduction}
\label{sec:intro}

In \cite{Gray}, Gray, Greenhalgh, Hu, Mao and Pan discuss the
asymptotic dynamics of a stochastic SIS epidemic model. Especially,
they show the ergodic property and the recurrence of the model.
Recently, there are also some other papers concerned on the
ergodicity of stochastic epidemic models such as \cite{Liu},
\cite{Yang} e.t.c. In these papers, to obtain the ergodicity and the
recurrence, the noise is usually assumed to be small. According to
the theory of Markov processes, the recurrence implies that it can
reach any state in a finite time. Then another question arises: how
long will it take? In this paper, we will investigate the asymptotic
limits of the hitting times for any state for sufficiently small
noise. This study may be helpful to the investigation of the rate
under control of the disease transmission.

Firstly, let us recall some notations and results in \cite{Gray}.
Let $(\Omega, \mathcal{F}, \{\mathcal{F}_t\}_{t\geq0}, \mathbb{P})$
be a complete probability space with a filtration
$\{\mathcal{F}_t\}_{t\geq0}$ satisfying the usual conditions, and
$\{B(t), t\geq 0\}$ be a scalar standard Brownian motion defined on
the probability space. The stochastic version of the well known SIS
model is given by the following It\^{o} SDE
\begin{equation*}\label{SDE}
\left\{
\begin{aligned}
&dS(t)=[\mu N-\beta S(t)I(t)+\gamma I(t)- \mu S(t)]dt-\sigma S(t)I(t)dB(t),\\
&dI(t)=[\beta S(t)I(t)-(\mu+\gamma)I]dt+\sigma S(t)I(t)dB(t).
\end{aligned}
\right.
\end{equation*}
\textcolor[rgb]{0.00,0.00,1.00}{Given} that $S(t)+I(t)=N$,
\textcolor[rgb]{0.00,0.00,1.00}{it is sufficient to study} the
following SDE for $I(t)$
\begin{equation}\label{SDE I}
dI(t)=I(t)\left([\beta N-\mu-\gamma-\beta I(t)]dt+\sigma
(N-I(t))dB(t)\right)
\end{equation}
with initial value $I(0)=x\in(0, N)$. In \cite{Gray}, they showed
that if $R^S_0:=\frac{\beta
N}{\mu+\gamma}-\frac{\sigma^2N^2}{2(u+r)}>1$, then the SDE
\eqref{SDE I} obeys
$$
\limsup_{t\to\infty}I(t)\geq\xi, \quad
\liminf_{t\to\infty}I(t)\leq\xi, ~~a.s.,
$$
where
$\xi=\sigma^{-2}\left(\sqrt{\beta^2-2\sigma^2(\mu+\gamma)}-(\beta-\sigma^2
N)\right)$ and
$\lim\limits_{\sigma\to0}\xi=N-\frac{\mu+\gamma}{\beta}$ (Theorem
5.1 in \cite{Gray}). This showed some recurrence of the model: I(t)
will rise to or above the level $\xi$ infinitely often with
probability one.

In fact, they actually showed the ergodic property and recurrence
when $R^S_0>1$ (Theorem 6.2 in \cite{Gray}). That is to say, the SDE
\eqref{SDE I} can reach any point in $(0, N)$. According to some
other papers concerned on the ergodicity of stochastic epidemic
models such as \cite{Liu}, \cite{Yang}, the noise is usually assumed
to be small enough to obtain the ergodicity. Therefore, in this
paper, we are interested in the scopes of the hitting times when the
noise is sufficiently small, i.e., how long will it take to arrive
at any fixed point in $(0, N)$?

\textcolor[rgb]{0.00,0.00,1.00}{A question may arise: what about the
other cases of $\sigma$ when $R^S_0>1$? Actually, the problem
becomes much more complicated to solve and this paper is an attempt
to investigate the limits of the hitting times for sufficiently
small noise.}

To emphasize the dependence of $\sigma$, we will denote the solution
to \eqref{SDE I} by $I^\sigma(\cdot)$ throughout this paper.
Obviously, $I^0(\cdot)$ is the solution to the deterministic system.
Now, we will formulate our question in the recurrent condition and
assume that $R^D_0:=\frac{\beta N}{\mu+\gamma}>1$ for the sake of
the recurrence throughout this paper (Obviously, which is equivalent
to $R^S_0>1$, if $\sigma$ is sufficiently small). For any
$y\in(0,N)$, define
$$
\tau^\sigma_y:=\inf\{t\geq0; I^\sigma(t)=y\}.
$$
Clearly, $\tau^\sigma_y$ is a stopping time, and we will investigate
its asymptotic limit as $\sigma\to0$. Obviously, by Theorem 3.1 in
\cite{Gray}, $\tau^\sigma_0=\infty$, a.s., thus
$\tau^\sigma=\tau^\sigma_0\wedge\tau^\sigma_y$, which is the exit
time from $[0,y]$. Therefore, it is encouraged to consider the
problem of exit from $[0,y]$. But the model \eqref{SDE I} has a
degenerate diffusion coefficient at $0$, and starting from any
neighborhood of the characteristic boundary $0$, the hitting times
of the other points in the neighborhood of $0$ seem sufficiently
large, which does not satisfy the conditions for the exit problem
from a domain (\cite{Dembo}). Hence, we need to introduce the
auxiliary stopping times, and investigate their asymptotic limits
using the variations of calculus and the large deviation.

In this paper, we organize the sections as followed. In Section 2,
we will introduce our main results. In Section 3, we will give some
preliminaries used later. Section 4 will end this paper with the
proof of main results.

\section{Main results}
\label{sec:lists}

Firstly, we will give some symbols. Define
$*=N-\frac{\mu+\gamma}{\beta}$, and
\begin{equation*}
\begin{aligned}
\overline{V}_y&\triangleq\inf_{t>0}\inf_{u\in
L^2([0,t])}\left\{\frac{\int^t_0|u(s)|^2ds}{2}; \phi(t)=y, ~where~\right.\\
&\left.\phi(s)=*+\int^s_0\phi(\theta)\left[(\beta+\sigma
u(\theta))(N-\phi(\theta))-\mu-\gamma\right]d\theta\right\}.
\end{aligned}
\end{equation*}

\begin{theorem}\label{main results}
For any $x, y\in(0,N)$, if $I^\sigma(0)=I^0(0)=x$,
$T_y=\inf\{t\geq0; I^0(t)=y\}=\frac{1}{\beta
*}\ln\frac{y(x-*)}{x(y-*)}$, then for any $\delta>0$,

(1)~if~ $0<x<*$, $*<y<N$ or $0<y<x$, then
\begin{equation*}
\lim_{\sigma\to0}\mathbb{P}_x\left\{e^{\frac{\overline{V}_y-\delta}{\sigma^2}}<\tau^\sigma_y<e^{\frac{\overline{V}_y+\delta}{\sigma^2}}
\right\}=1,
\end{equation*}
and $0<\overline{V}_y<\infty$;

(2)~if~ $0<x<*$, $x\leq y<*$, then
\begin{equation*}
\lim_{\sigma\to0}\mathbb{P}_x\left\{|\tau^\sigma_y-T_y|>\delta\right\}=1,
\end{equation*}
and $T_y<\infty$;

(3)~if~$0<x<*$ or $*<x<N$, $y=*$, then
$\lim\limits_{\sigma\to0}\tau^\sigma_*=\infty$, and
$\lim\limits_{\sigma\to0}\sigma^2\ln\tau^\sigma_*=0$ in probability;

(4)~if~$*<x<N$, $x<y<N$ or $0<y<*$, then
\begin{equation*}
\lim_{\sigma\to0}\mathbb{P}_x\left\{e^{\frac{\overline{V}_y-\delta}{\sigma^2}}<\tau^\sigma_y<e^{\frac{\overline{V}_y+\delta}{\sigma^2}}
\right\}=1,
\end{equation*}
and $0<\overline{V}_y<\infty$;

(5)~if~$*<x<N$, $*<y<x$, then
\begin{equation*}
\lim_{\sigma\to0}\mathbb{P}_x\left\{|\tau^\sigma_y-T_y|>\delta\right\}=1,
\end{equation*}
and $T_y<\infty$.
\end{theorem}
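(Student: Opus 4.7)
The plan is to split the statement into sub-cases according to the relative positions of $x$, $y$, and the deterministic equilibrium $*=N-(\mu+\gamma)/\beta$, since the asymptotics of $\tau^\sigma_y$ depend critically on whether $y$ lies on the forward orbit of the deterministic flow starting at $x$. The deterministic equation reads $\dot I^0=\beta I^0(*-I^0)$, so $I^0(t)$ approaches $*$ monotonically, increasing when $0<x<*$ and decreasing when $*<x<N$. Cases (2) and (5) are thus ``deterministically reachable'' configurations in which the orbit from $x$ crosses $y$ transversally at time $T_y$; cases (1) and (4) are ``unreachable'' configurations in which hitting $y$ requires a rare noise-driven excursion; case (3) ($y=*$) is intermediate because the equilibrium is approached but never reached in finite time by the deterministic flow.

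For cases (2) and (5) I would exploit continuous dependence on the noise parameter. After introducing an auxiliary stopping time that keeps $I^\sigma$ inside a fixed compact subinterval $[a,b]\subset(0,N)$ containing the deterministic orbit up to time $T_y+\delta$, the coefficients of \eqref{SDE I} are Lipschitz on $[a,b]$, and a standard Gronwall--Doob estimate gives
$$\lim_{\sigma\to 0}\mathbb{P}\Big\{\sup_{0\le t\le T_y+\delta}|I^\sigma(t)-I^0(t)|>\eta\Big\}=0$$
for every $\eta>0$. Since $\dot I^0(T_y)=\beta y(*-y)\neq 0$, the crossing of level $y$ is transversal, and uniform closeness of $I^\sigma$ to $I^0$ on $[0,T_y+\delta]$ forces $\tau^\sigma_y\to T_y$ in probability.

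For cases (1) and (4) I would invoke the Freidlin--Wentzell theory for \eqref{SDE I}. With high probability $I^\sigma$ first settles near $*$ on an $O(1)$ time scale (as the unique stable equilibrium of the drift), so the asymptotics of $\tau^\sigma_y$ are governed by the exit-time asymptotics from a small neighborhood of $*$ to the level $y$. Classical Wentzell--Freidlin exit-time estimates then yield
$$\lim_{\sigma\to 0}\mathbb{P}_x\Big\{e^{(\overline{V}_y-\delta)/\sigma^2}<\tau^\sigma_y<e^{(\overline{V}_y+\delta)/\sigma^2}\Big\}=1,$$
where $\overline{V}_y$ is the quasi-potential from $*$ to $y$ defined in the statement. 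A separate calculus-of-variations argument is required to verify $0<\overline{V}_y<\infty$: finiteness follows from exhibiting any admissible control driving $\phi$ from $*$ to $y$, while strict positivity follows from the fact that $y$ does not belong to the deterministic forward orbit of $*$. For case (3), a two-scale argument suffices: the drift drives $I^\sigma$ into a $\sigma^\alpha$-neighborhood of $*$ (any $0<\alpha<1$) within time $O(\log(1/\sigma))$, and near $*$ the drift is approximately linear with restoring rate $\beta *$ while the diffusion is nondegenerate of order $\sigma$, so an Ornstein--Uhlenbeck-type comparison provides an additional $O(1)$ time to cross $*$ exactly. Hence $\tau^\sigma_*=O(\log(1/\sigma))$ in probability, which gives simultaneously $\tau^\sigma_*\to\infty$ and $\sigma^2\log\tau^\sigma_*\to 0$.

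The main obstacle will be to implement the Freidlin--Wentzell analysis rigorously in cases (1) and (4): the diffusion coefficient $\sigma I(N-I)$ of \eqref{SDE I} vanishes at the characteristic boundaries $0$ and $N$, so the classical framework (which typically requires nondegeneracy on the closure of the relevant domain) does not apply directly. This motivates the authors' strategy of auxiliary stopping times: one localizes to a compact subinterval of $(0,N)$ on which the diffusion is uniformly elliptic, applies the standard large-deviation exit-time asymptotics there, and separately controls the probability of exit through the endpoints $0$ or $N$ before hitting $y$, using the fact that the drift $\beta I(*-I)$ pushes $I^\sigma$ back toward $*$ near those boundaries. A secondary technical difficulty is verifying that the variational problem defining $\overline{V}_y$ is well-posed and admits a minimizer, which requires Euler--Lagrange analysis of the associated action functional.
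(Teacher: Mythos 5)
Your overall architecture coincides with the paper's: Gronwall plus a martingale maximal inequality for the deterministically reachable cases (2) and (5), and Freidlin--Wentzell exit asymptotics localized by auxiliary stopping times at the levels $\rho$ and $N-\rho$ for cases (1) and (4). However, the decisive step in (1) and (4) is missing. After localizing to $(\rho,N-\rho)$, the exit theorem (Theorem 5.7.11 of Dembo--Zeitouni) controls $\tau^\sigma_y\wedge\tau^\sigma_\rho\wedge\tau^\sigma_{-\rho}$ with exponent $\overline{V}_y\wedge\overline{V}_\rho\wedge\overline{V}_{-\rho}$; to recover the asymptotics of $\tau^\sigma_y$ alone you must prove that for $\rho$ small enough $\overline{V}_\rho\wedge\overline{V}_{-\rho}>\overline{V}_y+2\delta$, i.e.\ that $\lim_{\rho\to0}\overline{V}_\rho=\lim_{\rho\to0}\overline{V}_{-\rho}=\infty$. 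Your justification --- ``the drift pushes $I^\sigma$ back toward $*$ near the boundaries'' --- is not an argument: the drift also pushes back toward $*$ on the $y$ side of the equilibrium, and yet $\overline{V}_y<\infty$. What is needed is a quantitative divergence of the action required to reach level $\rho$, and this is exactly where the degeneracy of the diffusion coefficient $\sigma I(N-I)$ bites (in the coordinate $\ln I$ the controlled path must travel a distance $\ln(1/\rho)\to\infty$ against a nonvanishing drift with a bounded control coefficient). The paper spends essentially all of its technical effort on this point: Lemma 3.1 (an exponential-martingale bound of order $\exp\{-(\ln\rho^{-1})^2/(8\sigma^2 T_n M)\}$), Lemma 3.2 (existence of a monotone minimizer confined to $[\rho,*-\delta]$), and Proposition 3.1 (a contradiction argument combining these with the LDP lower bound). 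Your proposal offers no substitute; your ``secondary difficulty'' about well-posedness of the variational problem for fixed $y$ is a different and much easier issue. (In one dimension one could instead compute the quasi-potential explicitly as $2\beta\int_*^{z}(v-*)/\bigl(v(N-v)^2\bigr)\,dv$ and read off the logarithmic divergence as $z\to0$, but some such argument must be supplied.)

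On case (3), your two-scale Ornstein--Uhlenbeck comparison is a genuinely different route from the paper's (which sends $y\downarrow *$, shows $\overline{V}_y\to0$, and invokes part (1)), and it would give the sharper information $\tau^\sigma_*=O(\log(1/\sigma))$; but note that an upper bound of this form does \emph{not} ``give simultaneously $\tau^\sigma_*\to\infty$''. The divergence requires a separate lower bound, e.g.\ $\tau^\sigma_*>\tau^\sigma_y$ for $x<y<*$ combined with part (2) and $T_y\to\infty$ as $y\uparrow *$, which is how the paper argues. A final side remark: in cases (2) and (5) your conclusion $\tau^\sigma_y\to T_y$ in probability is what the paper actually proves; the displayed statement $\lim_{\sigma\to0}\mathbb{P}_x\{|\tau^\sigma_y-T_y|>\delta\}=1$ in the theorem is evidently a misprint for $<\delta$.
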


\begin{remark}
By the results of \cite{Gray}, we know that $\tau^\sigma_y<\infty$
a.s. for any $y\in(0,N)$. But the scopes of the hitting times depend
on the relative position between the initial and the hitting states.
Take $0<x<*$ for an example. If $x<y<*$, then the hitting time
$\tau^\sigma_y$ approaches a fixed constant with a large probability
when the noise is small enough. But if $*<y<N$, the time to arrive
at $y$ is exponentially large about the noise $\sigma$ with a large
probability. This delicate description may help us understand the
disease transmission better.
\end{remark}

\section{Preliminaries}
\label{sec:eqn} \setcounter{equation}{0}

Before the proofs of main results, we will give some well known
results concerned on the problem of exit from a domain. The revelent
literature may be found in \cite{Day89}, \cite{Day92}, \cite{Duk87},
\cite{Fre85b} etc and references therein. In this paper, we suggest
\cite{Dembo} for reference.

Consider the SDE
\begin{equation}\label{stochastic system}
\left\{
\begin{aligned}
&dx^\varepsilon(t)=b(x^\varepsilon(t))dt+\sqrt{\varepsilon}\sigma(x^\varepsilon(t))d\omega(t),\\
&x^\varepsilon(t)\in\mathbb{R}^d, ~x^\varepsilon(0)=x,
\end{aligned}
\right.
\end{equation}
in the open, bounded domain $G\subset\mathbb{R}^d$, where $b(\cdot)$
and $\sigma(\cdot)$ are uniformly Lipschitz continuous functions of
appropriate dimensions and $\omega(\cdot)$ is a standard Brownian
motion.

Define the cost function
\begin{equation*}
\begin{aligned}
V(y,z,t)&\triangleq\inf\left\{I_{y,t}(\phi); \phi\in C([0,t]):
\phi(t)=z\right\}\\
&=\inf\left\{\frac{\int^t_0|u(s)|^2ds}{2}; u\in L^2([0,t]),
\phi(t)=z,\right.\\
&\left.
~where~\phi(s)=y+\int^s_0b(\phi(\theta))d\theta+\int^s_0\sigma(\phi(\theta))u(\theta)
d\theta\right\},
\end{aligned}
\end{equation*}
where $I_{y,t}(\cdot)$ is the good rate function of (5.5.26) in
\cite{Dembo}, which controls the LDP (large deviation principles)
associated with \eqref{stochastic system}.

Define
\begin{equation*}
V(y,z)\triangleq\inf_{t>0}V(y,z,t).
\end{equation*}

{\bf Assumption (A-1)} The unique stable equilibrium point in $G$ of
the d-dimensional ordinary differential equation
\begin{equation}\label{deterministic system}
\dot{\phi}(t)=b(\phi(t))
\end{equation}
is at $0\in G$, and
\begin{equation*}
\phi(0)\in G\Rightarrow\forall t>0, \phi(t)\in G ~and~
\lim_{t\to\infty}\phi(t)=0
\end{equation*}

{\bf Assumption (A-2)} All the trajectories of the deterministic
system \eqref{deterministic system} starting at $\phi(0)\in\partial
G$ converge to $0$ as $t\to\infty$.

{\bf Assumption (A-3)}
$\overline{V}\triangleq\inf\limits_{z\in\partial G}V(0,z)<\infty$.

{\bf Assumption (A-4)} There exists an $M<\infty$ such that, for all
$\rho>0$ small enough and all $x, y$ with $|x-z|+|y-z|\leq\rho$ for
some $z\in\partial G\cup\{0\}$, there is a function $u$ satisfying
that $||u||<M$ and $\phi(T(\rho))=y$, where
\begin{equation*}
\phi(t)=x+\int^t_0 b(\phi(s))ds+\int^t_0\sigma(\phi(s))u(s)ds
\end{equation*}
and $T(\rho)\to0$ as $\rho\to0$.

\begin{theorem}(Theorem 5.7.11 in \cite{Dembo})\label{Thm 5.7.11}
Assume (A1)-(A4). For all $x\in G$ and all $\delta>0$,
\textcolor[rgb]{0.00,0.00,1.00}{$\tau^\varepsilon=\inf\{t\geq0;
x^\varepsilon(t)\in\partial G\}$,}
\begin{equation*}
\lim_{\varepsilon\to0}\mathbb{P}\left\{e^{\frac{\overline{V}-\delta}{\varepsilon}}<\tau^\varepsilon<e^{\frac{\overline{V}+\delta}{\varepsilon}}\right\}=1.
\end{equation*}
\end{theorem}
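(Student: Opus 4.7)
The plan is to follow the classical Freidlin--Wentzell exit-time argument, splitting into matching upper and lower bounds on $\tau^\varepsilon$ of the form $e^{(\overline{V}\pm\delta)/\varepsilon}$. The backbone is the sample-path LDP with good rate function $I_{y,t}$ governing $x^\varepsilon$, together with the strong Markov property. Roughly, the deterministic flow (guaranteed by (A-1) and (A-2)) funnels every trajectory back to a neighbourhood of $0$, so the exit event must be driven by the noise, and its probability over one ``cycle'' of return-and-attempt is controlled by the quasipotential $\overline{V}$.

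For the upper bound $\mathbb{P}\{\tau^\varepsilon>e^{(\overline{V}+\delta)/\varepsilon}\}\to 0$: pick $z^*\in\partial G$ and $t^*>0$ with $V(0,z^*,t^*)<\overline{V}+\delta/4$, and fix a control $u^*$ realising it. By (A-4) one can glue short corrective arcs at the endpoints so that, starting from any point in a small neighbourhood of $0$, a controlled path reaches $\partial G$ within a fixed time $T^*$ with action at most $\overline{V}+\delta/2$. The LDP lower bound then yields a per-cycle exit probability bounded below by $e^{-(\overline{V}+3\delta/4)/\varepsilon}$. Iterating over $N_\varepsilon\approx e^{(\overline{V}+\delta)/\varepsilon}/T^*$ disjoint windows via the strong Markov property (and invoking (A-1) to force quick returns near $0$ between windows) reduces the no-exit probability to at most $(1-e^{-(\overline{V}+3\delta/4)/\varepsilon})^{N_\varepsilon}$, which tends to $0$ because $N_\varepsilon\cdot e^{-(\overline{V}+3\delta/4)/\varepsilon}\to\infty$.

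For the lower bound $\mathbb{P}\{\tau^\varepsilon<e^{(\overline{V}-\delta)/\varepsilon}\}\to 0$: introduce stopping times $\eta_n,\zeta_n$ that track successive visits of $x^\varepsilon$ to a small sphere $\{|x|=\rho\}$ around $0$ and to a larger sphere $\{|x|=\rho'\}$ with $\rho<\rho'$. On each excursion between $\eta_n$ and $\zeta_n$ the path either exits $G$ or returns to the inner sphere; by the LDP upper bound, the probability that $x^\varepsilon$ reaches $\partial G$ before returning in any fixed time window is bounded by $e^{-(\overline{V}-\delta/2)/\varepsilon}$, uniformly in the starting point on the inner sphere (using continuity of the cost function at $0$ and on $\partial G$, secured by (A-4)). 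A union bound over the at most $e^{(\overline{V}-\delta)/\varepsilon}$ cycles that fit into the time window $e^{(\overline{V}-\delta)/\varepsilon}$, combined with a deterministic bound on cycle lengths coming from (A-1), yields the desired estimate.

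The chief obstacle is not the Freidlin--Wentzell skeleton itself but the uniform control required near the two ``singular'' sets $\{0\}$ and $\partial G$: one must (i) show that $V(y,z,t)$ is continuous in $y$ at $0$ and in $z$ on $\partial G$, which is exactly what (A-4) delivers via cheap corrective controls of bounded $L^2$-norm, and (ii) prove that excursions away from $0$ are not too long, so that the cycle decomposition is well defined with only sub-exponential loss in $\varepsilon$. Reconciling the compactness of sub-level sets of $I_{y,t}$ with the non-compactness in the time variable (since $\overline{V}$ is an infimum over $t>0$) is the subtlest technical point and is typically handled by a short-cut lemma showing that the infimum is nearly attained at some bounded $t$.
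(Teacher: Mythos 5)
The paper offers no proof of this statement: it is quoted verbatim as Theorem 5.7.11 of Dembo--Zeitouni, so there is nothing internal to the paper to compare against. Your outline is precisely the Freidlin--Wentzell cycle argument that Dembo and Zeitouni use to prove it (their auxiliary lemmas supply the uniform per-cycle exit-probability lower bound near the equilibrium, the uniform upper bound on reaching $\partial G$ before returning to the small sphere, and the control on excursion lengths), so as a plan it is sound and is the intended proof. One small correction: in the lower-bound half, the control on cycle lengths is not a \emph{deterministic} bound coming from (A-1); it is the super-exponential probabilistic estimate that the diffusion cannot travel the distance between the inner and outer spheres in a very short time, and it is this estimate (together with the bound on long sojourns away from the equilibrium) that makes the union bound over cycles close.
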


Now, we turn to our proofs. We adapt the old symbols given above.
\textcolor[rgb]{0.00,0.00,1.00}{In our case, $(0,N)$ play the same
role as $R^d$ in (\ref{stochastic system}) and for any $y\in(0,N)$,
$*=N-\frac{\mu+\gamma}{\beta}$ is the positive equilibrium of the
deterministic model $I^0(\cdot)$ as $0$ in (\ref{stochastic
system}).} Note that $\overline{V}_y=V(*,y)$, and defne
$\overline{V}_\rho=V(*,\rho)$, $\overline{V}_{-\rho}=V(*,N-\rho)$
associated with the SDE \eqref{SDE I} and
$\tau^\sigma_\rho=\inf\{t\geq0; I^\sigma(t)=\rho\}$, and
$\tau^\sigma_{-\rho}=\inf\{t\geq0; I^\sigma(t)=N-\rho\}$.

\begin{lemma}\label{LDP lower bound} For any positive sequence $\{T_n,n\geq1\}$ such that $\sup\limits_n T_n<\infty$ and sufficiently small $\rho_n>0$, there
exists a $M>0$ such that
\begin{align*}\label{results from the lower bound}
\limsup_{\sigma\to0}\sigma^2\log\mathbb{P}_{*-\delta_0}\left\{\inf_{t\in[0,T_n]}I^\sigma(t)<2\rho_n,
I^\sigma(t)\in(\frac{\rho_n}{2},*-\frac{\delta_0}{2})\right\}
\leq-\frac{(\ln\rho^{-1}_n)^2}{8\sigma^2T_n M}.
\end{align*}
\end{lemma}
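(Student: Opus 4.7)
The plan is to transform to the logarithmic variable $Y=\ln I^\sigma$, which linearises the multiplicative structure of the SDE and renders both its drift and diffusion coefficients bounded on the band $[\rho_n/2,*-\delta_0/2]$. A Bernstein-type exponential martingale inequality then produces the Gaussian-in-$\ln\rho_n^{-1}$ tail estimate.

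First I would introduce the exit time
$$\tau_n=\inf\{t\geq 0:\ I^\sigma(t)\notin(\rho_n/2,\,*-\delta_0/2)\}.$$
On the event whose probability is to be bounded we must have $\tau_n\geq T_n$, so $I^\sigma$ stays in the band throughout $[0,T_n]$ and $\inf_{[0,T_n]}I^\sigma<2\rho_n$. Applying It\^o's formula to $Y(t)=\ln I^\sigma(t\wedge\tau_n)$ yields
$$dY=\bigl[\beta(N-I^\sigma)-(\mu+\gamma)-\tfrac{\sigma^2}{2}(N-I^\sigma)^2\bigr]\,dt+\sigma(N-I^\sigma)\,dB.$$
Because $I^\sigma\leq *-\delta_0/2$ and $N-*=(\mu+\gamma)/\beta$, the deterministic part of the drift is bounded below by $\beta\delta_0/2>0$, so for $\sigma$ sufficiently small the total drift is nonnegative and bounded above by $\beta N$, while $|\sigma(N-I^\sigma)|\leq\sigma N$.

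Set $M(t)=\int_0^{t\wedge\tau_n}\sigma(N-I^\sigma)\,dB$. On the prescribed event, some $t\leq T_n$ satisfies
$$M(t)\ \leq\ Y(t)-Y(0)\ <\ \ln(2\rho_n)-\ln(*-\delta_0),$$
and the right-hand side is dominated by $-\tfrac{1}{2}\ln\rho_n^{-1}$ once $\rho_n$ is small enough. Since $\langle M\rangle_{T_n}\leq\sigma^2N^2T_n$, the exponential (Bernstein) inequality for continuous martingales yields
$$\mathbb{P}_{*-\delta_0}\Bigl\{\inf_{[0,T_n]}M\leq-\tfrac{1}{2}\ln\rho_n^{-1}\Bigr\}\leq\exp\Bigl(-\frac{(\ln\rho_n^{-1})^2}{8\sigma^2N^2T_n}\Bigr),$$
and the lemma follows with $M=N^2$ after taking logarithms and multiplying by $\sigma^2$.

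The delicate step I expect is keeping all constants uniform in $n$ and in $\sigma\to 0$. The essential observation is that on the band $N-I^\sigma$ stays bounded below by $(\mu+\gamma)/\beta+\delta_0/2$ independently of $n$, so the positivity of the drift holds uniformly for small $\sigma$ and the It\^o correction $-\tfrac{\sigma^2}{2}(N-I^\sigma)^2$ is absorbed without contaminating the leading coefficient $1/(8\sigma^2N^2T_n)$ in the exponent.
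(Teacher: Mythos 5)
Your proposal is correct and follows essentially the same route as the paper: pass to $\ln I^\sigma$, observe that on the band the drift contribution cannot produce the required drop to $2\rho_n$, and charge the whole decrease of order $\ln\rho_n^{-1}$ to the martingale part via the exponential martingale inequality with $\langle M\rangle_{T_n}\le\sigma^2N^2T_n$. The only cosmetic difference is that you exploit the sign of the drift on $(\rho_n/2,\,*-\delta_0/2)$, whereas the paper bounds the drift in absolute value by $MT_n$ and assumes $MT_n\le\tfrac12\ln\rho_n^{-1}$; both yield the same exponent $-(\ln\rho_n^{-1})^2/(8\sigma^2N^2T_n)$.
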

\begin{proof}
Note that
\begin{equation*}
\begin{aligned}
&d\ln\left((I^\sigma(t))^{-1}\right)=-d\ln I^\sigma(t)=\left\{\beta
I^\sigma(t)-\beta
N+\mu+\gamma+\frac{\sigma^2(N-I^\sigma(t))^2}{2}\right\}dt-\sigma(N-I^\sigma(t))dB(t),\\
&\ln\left(I^{-1}(0)\right)=\ln(*-\delta_0).
\end{aligned}
\end{equation*}
Therefore, there is $M>0$ such that
$\left|\ln\left((I^\sigma)^{-1}(t)\right)\right|\leq
MT_n+\sigma\sup\limits_{t\in[0,T_n]}|\int^t_0(N-I^\sigma(s))dB(s)|$
for $t\in[0,T_n]$. Since $\rho_n>0$ is sufficiently small, we may
assume without loss of generality that $M
T_n\leq\frac{\ln\rho^{-1}_n}{2}$, then
\begin{equation}\label{aux-1}
\begin{aligned}
&\mathbb{P}_{*-\delta_0}\left\{\inf_{t\in[0,T_n]}I^\sigma(t)<2\rho_n,
I^\sigma(t)\in(\frac{\rho_n}{2},*-\frac{\delta_0}{2})\right\}\\
&\leq\mathbb{P}_{*-\delta_0}\left\{
\sigma\sup\limits_{t\in[0,T_n]}\left|\int^t_0(N-I^\sigma(s))dB(s)\right|>\frac{\ln\rho^{-1}_n}{2}\right\}.
\end{aligned}
\end{equation}
Let
$M_n(t)=\exp\left\{\lambda\sigma\int^t_0\left(N-I^\sigma(s)\right)dB(s)-\frac{\lambda^2\sigma^2}{2}\int^t_0\left(N-I^\sigma(s)\right)^2ds\right\}$,
where $\lambda=\frac{(\ln\rho^{-1}_n)}{2\sigma^2T_n N^2}>0$. Then
$\{M_n(t), t\geq0\}$ is a sequence of martingale and
\begin{equation}\label{exponential martingale}
\begin{aligned}
&\mathbb{P}_{*-\delta_0}\left\{
\sigma\sup\limits_{t\in[0,T_n]}\left|\int^t_0\left(N-I^\sigma(s)\right)dB(s)\right|>\frac{\ln\rho^{-1}_n}{2}\right\}\\
&\leq\mathbb{P}_{*-\delta_0}\left\{
\sigma\sup\limits_{t\in[0,T_n]}M_n(t)>\exp\left\{\frac{(\ln\rho^{-1}_n)^2}{8\sigma^2T_n
M}\right\}\right\}\\
&\leq\exp\left\{-\frac{(\ln\rho^{-1}_n)^2}{8\sigma^2T_n M}\right\},
\end{aligned}
\end{equation}
where the last inequality is derived by the exponential martingale
inequality. \eqref{aux-1} and \eqref{exponential martingale} implies
the desired result.

\end{proof}

\begin{lemma}\label{lemma V} For any sufficiently small
$\delta_0>0$, let
$$
\overline{V}_{m,\rho}:=\inf\limits_{T\leq m}\inf\limits_{\phi\in
C([0,T]),\phi(T)=\rho}I_{*-\delta_0, T}(\phi).
$$
If $\overline{V}_{m,\rho}<\infty$, then there is a decreasing
$\phi(\cdot)\in C([0,T])$ for some $T\leq m$ such that
$\phi(T)=\rho$ for the first time and
\begin{equation*}\label{V m}
\overline{V}_m=I_{*-\delta_0,T}(\phi),
~\phi(t)\in[\rho,*-\delta]~for~any~t\in[0,T].
\end{equation*}
\end{lemma}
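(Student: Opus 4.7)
The plan is to produce the optimizer by taking a minimizing sequence, progressively replacing each member by a ``cleaner'' path that still realizes the infimum up to a vanishing error, and then extracting a limit via the compactness of sub-level sets of the good rate function associated with \eqref{SDE I}.

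Concretely, I would fix a minimizing sequence $(T_n,\phi_n)$ with $T_n\le m$, $\phi_n(0)=*-\delta_0$, $\phi_n(T_n)=\rho$, and $I_{*-\delta_0,T_n}(\phi_n)\to \overline{V}_{m,\rho}$. Three successive reductions preserve (or improve) both the cost and the constraint $T\le m$. First, truncate at the first hitting time of $\rho$ so that $\phi_n>\rho$ on $[0,T_n)$. Second, translate the start time to $\sup\{t\le T_n:\phi_n(t)=*-\delta_0\}$, which by continuity forces $\phi_n\le *-\delta_0$ on the resulting interval (any later crossing of $*-\delta_0$ would contradict maximality). Third, monotonize: set $A_n=\{t:\phi_n(t)=\min_{s\le t}\phi_n(s)\}$, a closed set containing both endpoints; on each component $(s_i,t_i)$ of its open complement, continuity forces $\phi_n(s_i)=\phi_n(t_i)$, so excising these intervals and concatenating the rest produces a continuous non-increasing $\psi_n$ on $[0,T_n'']\subset[0,T_n]$ from $*-\delta_0$ to $\rho$. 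Additivity and non-negativity of $\tfrac12\int u_n^2\,ds$ over the remaining time yield $I(\psi_n)\le I(\phi_n)$.

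Next, the $\psi_n$ are non-increasing, valued in $[\rho,*-\delta_0]$, on intervals $[0,T_n'']\subset[0,m]$ with uniformly bounded cost. After reparametrizing each $\psi_n$ to the common interval $[0,1]$, compactness of sub-level sets of the Freidlin--Wentzell rate function (cf.\ Lemma~5.6.7 in \cite{Dembo}) delivers a uniformly convergent subsequence with limit $\psi^*$ on $[0,T^*]$ for some $T^*\in(0,m]$; positivity of $T^*$ uses that traversing $[\rho,*-\delta_0]$ with bounded cost cannot happen in zero time. Monotonicity and the range constraint pass to the uniform limit, and lower semi-continuity of the rate function combined with the minimizing property gives $I_{*-\delta_0,T^*}(\psi^*)=\overline{V}_{m,\rho}$. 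To see that $T^*$ is the first hitting time of $\rho$ by $\psi^*$: if $\psi^*(T')=\rho$ for some $T'<T^*$, then monotonicity forces $\psi^*\equiv\rho$ on $[T',T^*]$, and since $\rho$ is not an equilibrium of the drift $b$, the control must be non-zero there, so truncation at $T'$ would strictly lower the cost, contradicting optimality.

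The main obstacle lies in the monotonization step when $[0,T_n]\setminus A_n$ has infinitely many components: one must argue carefully that the cut-and-paste procedure produces an absolutely continuous path with square-integrable control whose cost is genuinely bounded by that of $\phi_n$. The resolution is to view the cost as a Lebesgue integral of $\tfrac12 u_n^2$ over the measurable set $A_n\subset[0,T_n]$ and use absolute continuity of $\phi_n$ (guaranteed by finite cost) together with countable additivity; the excised intervals contribute non-negative quantities that are simply dropped, so the cost accounting is insensitive to the cardinality of the collection. A secondary, more routine issue is the compactness step with variable terminal times $T_n''$, handled by the reparametrization to $[0,1]$ mentioned above.
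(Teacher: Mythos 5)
Your proposal is correct and follows essentially the same route as the paper: a minimizing sequence, truncation at the first hitting time of $\rho$, translation to the last exit time from $*-\delta_0$, extraction of a limit via weak $L^2$-compactness of the controls and lower semicontinuity of the rate function, and an excision argument for monotonicity. The only real difference is ordering — you monotonize each $\phi_n$ via the running-minimum set before passing to the limit (and reparametrize to $[0,1]$ rather than extending to a common terminal time), whereas the paper extracts the limit first and then shows the optimizer can be taken nonincreasing by excising an excursion and using strict positivity of the saved cost; both versions of the cut-and-paste are sound.
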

\begin{proof}
Since $\{\phi\in C([0,T]),\phi(T)=\rho\}$ is the closed set of
$C([0,T])$ and $I_{*-\delta_0, T}(\cdot)$ is a good rate function,
there exists a $\phi_T\in C([0,T]), T\leq m$ such that
$$
\inf\limits_{\phi\in C([0,T]),\phi(T)=\rho}I_{*-\delta_0,
T}(\phi)=I_{*-\delta_0, T}(\phi_T).
$$
Therefore, there is a sequence of $\{T_n, n\geq1\}$ and $\{\phi_n,
n\geq1\}$ such that $T_n\leq m$, $\phi_n\in C([0,T_n])$ and
$$
\phi_n(T_n)=\rho, ~I_{*-\delta_0,
T_n}(\phi_n)\to\overline{V}_{m,\rho}.
$$

Define
$$
\tau_n=\inf\{t\geq0;\phi_n(t)=\rho\}.
$$
Then $\tau_n\leq T_n$ and consider $\{\phi_n(t), t\in[0,\tau_n]\}$.
Since $\phi_n(\tau_n)=\rho$, $\overline{V}_m\leq
I_{*-\delta,\tau_n}(\phi_n)\leq I_{*-\delta_0, T_n}(\phi_n)$.
Therefore, we may assume that $\phi_n(T_n)=\rho$ for the first time
without loss of generality.

Similarly, define
$$
\widetilde{\tau}_n=\sup\{0\leq t\leq T_n;\phi_n(t)=*-\delta\}.
$$
Consider $\{\phi_n(t), t\in[\widetilde{\tau}_n,T_n]\}$. Since
$\phi_n(\widetilde{\tau}_n)=*-\delta$ and $\phi_n(T_n)=\rho$, we may
construct by homogeneity a trajectory
$\{\widetilde{\phi}_n(t),t\in[0,T_n-\widetilde{\tau}_n]\}$ such that
$\widetilde{\phi}_n(0)=*-\delta$,
$\widetilde{\phi}_n(T_n-\widetilde{\tau}_n)=\rho$ and
$\overline{V}_{m,\rho}\leq
I_{*-\delta_0,T_n-\widetilde{\tau}_n}(\widetilde{\phi}_n)\leq
I_{*-\delta_0, T_n}(\phi_n)$. Therefore, we may also assume that
$\phi_n(t)\leq*-\delta$ for any $t\in[0,T_n]$ without loss of
generality.

In all, there is a sequence of $\{T_n, n\geq1\}$ and $\{\phi_n,
n\geq1\}$ such that $T_n\leq m$, $\phi_n\in C([0,T_n])$,
$\phi_n(T_n)=\rho$, $I_{*-\delta_0, T_n}(\phi_n)\to\overline{V}_m$,
and $\phi_n(t)\in[\rho,*-\delta]$ for $t\in[0,T_n]$.

Note that $T_n\leq m$, we may assume that $T_n\uparrow T\leq m$
without loss of generality. For $t\in[T_n,T]$, let $u_n(t)=0$ and
$\phi_n(t)=\rho+\int^t_{T_n}\phi_n(s)(N-\mu-\gamma-\beta\phi_n(s))ds$.
Then $\phi_n(T)\to\rho$ as $n\to\infty$, $I_{*-\delta,
T}(\phi_n)=I_{*-\delta, T_n}(\phi_n)$ and
$\phi_n(t)\in[\rho,*-\delta]$ for $t\in[0,T]$ if $\rho$ is
sufficiently small.

Since $\overline{V}_{m,\rho}<\infty$,
$I_{*-\delta_0,T}(\phi_n)=\frac{\int^{T}_0 |u_n(t)|^2dt}{2}\leq
\overline{V}_{m,\rho}+1$, we may assume that $\{\phi_n, n\geq1\}$
converges to $\phi$ in $C([0,T])$. Alike the proof of Lemma 1.4.17
in \cite{Deuschel}, we could show that $\phi_n\to\phi_n$ in
$C([0,T])$ and $\phi_n$ converges weakly to $\phi_n$ in $H^1_T$.
Therefore, $u_n$ converges weakly to $u$ in $L^2([0,T])$, where
\begin{equation}\label{phi}
\phi_t=*-\delta_0+\int^t_0
\phi(s)\left(N-\mu\gamma-\beta\phi(s)\right)ds+\int^t_0\phi(s)\left(N-\phi(s)\right)u(s)ds.
\end{equation}
By Banach-Steinhaus Theorem,
$$
I_{*-\delta,T}(\phi)\leq\liminf_{n\to\infty}I_{*-\delta,T}(\phi_n)=\overline{V}_m.
$$
Obviously, $I_{*-\delta,T}(\phi)\geq\overline{V}_m$. Therefore,
$\overline{V}_m=I_{*-\delta,T}(\phi)$, where $T\leq m$,
$\phi(T)=\rho$ for the first time and $\phi(t)\in[\rho,*-\delta]$
for any $t\in[0,T]$.

In fact, we may assume that $\phi(\cdot)$ is nonincreasing in
$[0,T]$. Otherwise, there are $0\leq t_1<t_2\leq T$ such that
$\phi(t_1)<\phi(t_2)$. Since $\phi(\cdot)$ is continuous and
$\phi(T)=\rho$, there exists a $t_3>t_2$ such that
$\phi(t_3)=\phi(t_1)$. If $u(t)\equiv0$ a.s. for $t\in[t_1,t_3]$,
then $\phi\in[\rho,*-\delta]$ and
$\phi(t)=\phi(t_1)+\int^t_{t_1}\phi(s)(N-\mu-\gamma-\beta\phi(s))ds$
is increasing in $[t_1,t_3]$. This contradicts the assumption
$\phi(t_2)>\phi(t_3)$. This means that $u\neq0$ a.s. in $[t_1,t_3]$.
We could omit the time between $t_1$ and $t_3$, and splice the
trajectory in $[0,t_1]$ with the trajectory in $[t_3,T]$, and get a
new trajectory $\widetilde{\phi}(\cdot)$ in $C([0, T-(t_3-t_1)])$
such that $\widetilde{\phi}(0)=*-\delta$ and
$\widetilde{\phi}(T-(t_3-t_1))=\rho$ which satisfies
\begin{equation*}\label{cost function}
d\widetilde{\phi}(t)=\widetilde{\phi}(t)(\beta
N-\mu-\gamma-\beta\widetilde{\phi}(t))dt+\widetilde{\phi}(t)(N-\widetilde{\phi}(t))\widetilde{u}(t)dt,
\end{equation*}
in $[0, T-(t_3-t_1)]$, where $\widetilde{u}\in L^2\left([0,
T-(t_3-t_1)]\right)$ is defined according to $u$ by splice.  Since
$u\neq0$ a.s. in $[t_1,t_3]$, thus
$\frac{\int^{T-(t_3-t_1)}_0|\widetilde{u}(s)|^2ds}{2}<\frac{\int^T_0|u(s)|^2}{2}=\overline{V}_{m,\rho}$,
which contradicts the definition of $\overline{V}_{m,\rho}$.
Therefore, we may assume that $\phi(\cdot)$ is decreasing in
$[0,T]$.

\end{proof}

\begin{proposition}\label{proposition} For any $0<\rho<y$,
\begin{equation*}
\lim_{\rho\to0}\overline{V}_\rho=\lim_{\rho\to0}\overline{V}_{-\rho}=\infty.
\end{equation*}
\end{proposition}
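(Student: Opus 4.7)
\emph{Strategy.} I would exploit the degeneracy of the diffusion coefficient $\tilde\sigma(x)=x(N-x)$ in \eqref{SDE I}: driving the controlled trajectory close to the boundary points $0$ or $N$ is penalized by $1/[\phi(N-\phi)]^2$, which is non-integrable. Writing the drift as $b(\phi)=\phi[\beta(N-\phi)-(\mu+\gamma)]$ and solving the constraint $\dot\phi = b(\phi) + \tilde\sigma(\phi) u$ for $u$, the rate function along an absolutely continuous path $\phi$ with $\phi(0)=*$ takes the form
$$I_{*,T}(\phi) \;=\; \frac{1}{2}\int_0^T \frac{(\dot\phi(t)-b(\phi(t)))^2}{\phi(t)^2(N-\phi(t))^2}\,dt,$$
so the goal is to lower-bound this expression by a quantity that blows up as $\rho\to 0$.

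\emph{Key reduction and inequality.} To lower-bound $\overline V_\rho$, I would first use the concatenation inequality $V(*,\rho)\ge V(*-\delta_0,\rho)$ (for fixed small $\delta_0>0$; this holds because any trajectory from $*$ to $\rho$ must pass through $*-\delta_0$, and truncating its initial segment only removes nonnegative cost), and then apply Lemma~\ref{lemma V} to restrict the infimum for $V(*-\delta_0,\rho)$ to monotone nonincreasing paths $\phi:[0,T]\to[\rho,*-\delta_0]$. On any such path $b(\phi)>0$ and $\dot\phi\le 0$, so the pointwise bound
$$(\dot\phi - b(\phi))^2 \;\ge\; -4\,b(\phi)\dot\phi$$
holds (an AM--GM rearrangement of $\dot\phi^2+b^2\ge 2|b||\dot\phi|$). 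Substituting this and then evaluating the right-hand integral as a one-variable integral via the fundamental theorem of calculus (using that $b/[\phi^2(N-\phi)^2]$ is continuous on $[\rho,*-\delta_0]$) yields
$$\overline V_\rho \;\ge\; 2\int_\rho^{*-\delta_0}\frac{b(\phi)}{\phi^2(N-\phi)^2}\,d\phi \;=\; 2\int_\rho^{*-\delta_0}\frac{\beta(N-\phi)-(\mu+\gamma)}{\phi(N-\phi)^2}\,d\phi.$$
Since $R^D_0>1$, the numerator is bounded below by a positive constant on $[0,*-\delta_0]$, so the integrand is bounded below by a constant multiple of $1/\phi$ near $\phi=0$, and the integral diverges logarithmically. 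Hence $\overline V_\rho\to\infty$ as $\rho\to 0$.

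\emph{The symmetric case and the main difficulty.} For $\overline V_{-\rho}$ the same scheme applies with the direction of motion reversed. A symmetric analogue of Lemma~\ref{lemma V} (proved by an identical splicing argument) reduces the infimum to monotone nondecreasing paths from $*+\delta_0$ to $N-\rho$; on these $b(\phi)<0$ and $\dot\phi\ge 0$, the same inequality applies, and one obtains
$$\overline V_{-\rho} \;\ge\; 2\int_{*+\delta_0}^{N-\rho}\frac{(\mu+\gamma)-\beta(N-\phi)}{\phi(N-\phi)^2}\,d\phi,$$
whose integrand behaves like $(\mu+\gamma)/[N(N-\phi)^2]$ as $\phi\to N$, giving divergence of order $1/\rho$. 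The main technical obstacle is the monotonicity reduction: Lemma~\ref{lemma V} was stated only for initial value $*-\delta_0$, so I must invoke both the concatenation bound (to pass from $*$ to $*-\delta_0$) and the fact that $\delta_0$ may be held fixed while $\rho\to 0$ without affecting the divergence of the integral. Once these points are in place, the rest of the argument is an elementary one-dimensional integral estimate.
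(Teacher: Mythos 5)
Your proposal is correct, and it takes a genuinely different route from the paper. The paper argues by contradiction: assuming $\lim_{\rho\to0}\overline V_\rho<\infty$, it extracts (via Lemma~\ref{lemma V}) monotone near-minimizers $\phi_n$ from $*-\delta_0$ to $\rho_n$ with uniformly bounded cost and uniformly bounded time horizons $T_n$, and then plays the LDP lower bound off against a probabilistic estimate (Lemma~\ref{LDP lower bound}, an exponential-martingale bound for $\ln I^\sigma$) to force $I_{*-\delta_0,T_n}(\phi_n)\to\infty$, a contradiction. You instead bound the rate function directly and deterministically: after the same two reductions the paper uses (concatenation to replace the start point $*$ by $*-\delta_0$, and Lemma~\ref{lemma V} to restrict to monotone paths confined to $[\rho,*-\delta_0]$), the inequality $(\dot\phi-b)^2\ge-4b\dot\phi$ together with the change of variables $s=\phi(t)$ turns the cost into the explicit one-dimensional integral $2\int_\rho^{*-\delta_0}\beta(*-s)\,s^{-1}(N-s)^{-2}ds$, which diverges like $\ln(1/\rho)$ (and like $1/\rho$ on the $N$ side). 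This buys you two things the paper's argument does not give: you dispense entirely with Lemma~\ref{LDP lower bound} and with the LDP lower bound (so no probabilistic input is needed at this stage, and you avoid the somewhat awkward event-matching there, where the stated exponent even retains a stray $\sigma^2$), and you obtain quantitative divergence rates rather than bare divergence. The points you flag as technical obstacles are real but are correctly resolved: the concatenation inequality $V(*,\rho)\ge V(*-\delta_0,\rho)$ is the right (and clean) direction, $\delta_0$ is indeed held fixed as $\rho\to0$, the integrand's numerator $\beta(N-s)-(\mu+\gamma)=\beta(*-s)\ge\beta\delta_0$ on $[0,*-\delta_0]$ because $R_0^D>1$, and the symmetric monotone-rearrangement lemma near $N$ follows by the same splicing argument. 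The only caveat is that your lower bound is applied to the minimizers supplied by Lemma~\ref{lemma V} for each horizon $m$ with $\overline V_{m,\rho}<\infty$ and then infimized over $m$; since the resulting bound is independent of $m$, this is harmless.
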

\begin{proof}
We will give the proof of $V_\rho$, and the same method holds for
$\overline{V}_{-\rho}$.

Note that $\overline{V}_\rho$ is nondecreasing as $\rho\to0$.
Therefore, if
\begin{equation}\label{V 0}
\overline{V}:=\lim\limits_{\rho\to0}\overline{V}_\rho<\infty,
\end{equation}
then for sufficiently small $\rho>0$, we have
$\overline{V}_{\rho}\leq \overline{V}<\infty$.

Let $u(t)\equiv-\frac{2r_0\beta^2}{2r_0\beta+\mu+\gamma}$, where
$r_0$ is a fixed constant such that $0<2r_0<*$, and
\begin{equation*}
\begin{aligned}
\phi_t&=*+\int^t_0
\phi(s)\left(N-\mu\gamma-\beta\phi(s)\right)ds+\int^t_0\phi(s)\left(N-\phi(s)\right)u(s)ds
\\
&=*+\int^t_0\frac{\beta(\mu+\gamma)}{2r_0\beta+\mu+\gamma}\phi(s)\left(
*-2r_0-\phi(s)\right)dt.
\end{aligned}
\end{equation*}
Then there exist positive and sufficiently small $\delta_0$ and
$t_0$ such that $\phi(t_0)=*-\delta_0$ and
$\frac{\int^{t_0}_0|u(s)|^2ds}{2}<\frac{\overline{V}}{2}$, which
implies that
\begin{equation}\label{V *}
V(*,*-\delta_0)<\frac{\overline{V}}{2}.
\end{equation}

Note that $V(*,\rho)\geq V(*,*-\delta_0)+V(*-\delta_0,\rho)$, thus
\eqref{V 0} and \eqref{V *} implies that
\begin{equation}\label{V1}
\lim_{\rho\to0}V(*-\delta_0, \rho)\leq\frac{\overline{V}}{2}<\infty.
\end{equation}

Note that
\begin{equation*}\label{V2}
V(*-\delta_0, \rho)=\inf\limits_{m>0}\overline{V}_{m,\rho},
\end{equation*}
thus Lemma \ref{lemma V} and \eqref{V1} implies that there exists a
sequence of  $\rho_n\to0$ as $n\to\infty$, and there is a
$I_{*-\delta_0, T_n}(\phi_n)$ such that $\phi_n(\cdot)$ is
decreasing, contained in $[\rho_n,*-\delta_0]$ and
\begin{equation}\label{I n}
\sup\limits_{n}I_{*-\delta_0,
T_n}(\phi_n)=\sup\limits_{n}\frac{\int^{T_n}_0|u_n(t)|^2dt}{2}<\infty,
\end{equation}
where the relationship between $\phi_n$ and $u_n$ is defined as
\eqref{phi}.

By Lemma \ref{lemma V}, $\phi'_n(t)\leq0$ and
$\phi_n(t)\in[\rho,*-\delta_0]$ for $t\in[0,T_n]$. Then
$$
\phi_n(t)\left(\tilde{\beta}N-\mu-\gamma-\tilde{\beta}\phi(t)\right)\leq0,
$$
where  $\tilde{\beta}=\beta+u_n(t)$.

By computation, for any $t\in[0,T_n]$,
\begin{equation*}
\begin{aligned}
u_n(t)&\leq\frac{\mu+\gamma}{1-\phi(t)}-\beta
\\
&=\frac{\beta(\phi(t)-*)}{N-\phi(t)}\leq-\frac{\beta\delta_0}{N-*}.
\end{aligned}
\end{equation*}
Therefore, $|u_n(t)|\geq \frac{\beta\delta_0}{N-*}$ for
$t\in[0,T_n]$. So
$I_{*-\delta_0,T_n}(\phi_n)\geq\frac{\beta^2\delta^2_0
T_n}{2(N-*)^2}$.

By \eqref{I n}, $\sup\limits_{n\to\infty}T_n<\infty$. Similarly, we
may also assume that for some $M>0$
\begin{equation}\label{I M}
\sup\limits_{n}I_{*-\delta_0,T_n}(\phi_n)\leq M.
\end{equation}

By the discussion above, $\phi_n\in[\rho_n,*-\delta_0]$ for
$t\in[0,T_n]$, $\sup\limits_{n\to\infty}T_n<\infty$ and
$\sup\limits_{n}I_{*-\delta_0,T_n}(\phi_n)\leq M$. On the other
hand, by the lower bound of the large deviation principle,
\begin{align*}\label{results from the lower bound}
\liminf_{\sigma\to0}\sigma^2\log\mathbb{P}_{*-\delta_0}\left\{\inf_{t\in[0,T_n]}I^\sigma(t)<2\rho_n,
I^\sigma(t)\in(\frac{\rho_n}{2},*-\frac{\delta_0}{2})\right\}\\
\geq-\inf\left\{I_{*-\delta_0,T_n}(\phi);\inf_{t\in[0,T_n]}\phi<2\rho_n,
\phi\in(\frac{\rho_n}{2},*-\frac{\delta_0}{2})\right\}.
\end{align*}
Then by Lemma \ref{LDP lower bound}, for any
$\inf\limits_{t\in[0,T_n]}\phi<2\rho_n,
\phi\in(\frac{\rho_n}{2},*-\frac{\delta_0}{2})$,
\begin{equation*}
I_{*-\delta_0,T_n}(\phi)\geq\frac{(\ln\rho^{-1}_n)^2}{8\sigma^2T_n
M}.
\end{equation*}
Especially, $\{\phi_n\}$ satisfies the above conditions and then
$I_{*-\delta_0,T_n}(\phi_n)\geq\frac{(\ln\rho^{-1}_n)^2}{8\sigma^2T_n
M}$.

Since $\sup\limits_n T_n<\infty$ and
$\lim\limits_{n\to\infty}\rho_n=0$, we have
$$\lim_{n\to\infty}I_{*-\delta_0,T_n}(\phi_n)=\infty,$$
which contradicts \eqref{I M}. Therefore, The proof is completed.
\end{proof}
\begin{remark}
By the definition of $\overline{V}_\rho$, we may prove that
$\overline{V}_0=\lim\limits_{\rho\to0}\overline{V}_\rho$. Therefore,
what we have to do is just to prove that $\overline{V}_0=\infty$.
Note that $\overline{V}_0<\infty$ is equivalent to the existence of
$0<T<\infty$ and $u(\cdot)\in L^2([0,T])$ such that $\phi(T)=0$ and
for $t\in[0,T]$,
\begin{equation*}\label{support thm}
\phi_t=*+\int^t_0
\phi(s)\left(N-\mu\gamma-\beta\phi(s)\right)ds+\int^t_0\phi(s)\left(N-\phi(s)\right)u(s)ds.
\end{equation*}
Therefore, one may be initialized to investigate the positivity for
the density of $I^\sigma(T)$ at $0$ with the initial condition
$I^\sigma(0)=*$ (see \cite{AK}, \cite{BL}, \cite{SV} and references
therein). But it should be careful that the diffusion coefficient is
degenerate at $0$ and a simple computation implies that the
H\"{o}rmander condition (we refer \cite{Norris} and \cite{SV} for
reference) are not satisfied. Therefore, the support theorems can
not be applied directly. Here, we adapt the analysis of variation
and the large deviation principle to get the desired results.

\end{remark}

\section{Proof of Main results}
\label{sec:eqn} \setcounter{equation}{0}

{\bf  Proof of Theorem \ref{main results}} ~(1) Firstly, note that
\begin{align*}
&\mathbb{P}\left\{\tau^\sigma_y>e^{\frac{\overline{V}_y\wedge
\overline{V}_\rho+\delta}{\sigma^2}} \right\}\\
&\leq \mathbb{P}\left\{\tau^\sigma_y\wedge\tau^\sigma_\rho>
e^{\frac{\overline{V}_y\wedge \overline{V}_\rho+\delta}{\sigma^2}}
\right\}+\mathbb{P}\left\{\tau^\sigma_\rho\wedge\tau^\sigma_{-\rho}\leq
e^{\frac{\overline{V}_y\wedge \overline{V}_\rho+\delta}{\sigma^2}}
\right\}\\
&\leq \mathbb{P}\left\{\tau^\sigma_y\wedge\tau^\sigma_\rho>
e^{\frac{\overline{V}_y\wedge \overline{V}_\rho+\delta}{\sigma^2}}
\right\}+\mathbb{P}\left\{e^{\frac{\overline{V}_\rho\wedge
\overline{V}_{-\rho}-\delta}{\sigma^2}}\leq\tau^\sigma_\rho\wedge\tau^\sigma_{-\rho}\leq
e^{\frac{\overline{V}_y\wedge \overline{V}_\rho+\delta}{\sigma^2}}
\right\}\\
&+\mathbb{P}\left\{\tau^\sigma_\rho\wedge\tau^\sigma_{-\rho}<
e^{\frac{\overline{V}_\rho\wedge \overline{V}_{-\rho}-\delta}{\sigma^2}} \right\}\\
&:=P^1_{\sigma,\rho}+P^2_{\sigma,\rho}+P^3_{\sigma,\rho}.
\end{align*}
In the following paragraph, we will give their estimation
respectively.

In the model of \eqref{SDE I}, the equilibrium of the deterministic
system is $*$ and consider the boundary $\partial G=\{\rho, y\}$ for
$y>*$ and $\rho$ is sufficiently small. It can be verified that the
Assumptions (A-1)$-$(A-4) are satisfied. We will give the detail of
them below.

The Assumption (A-1) and (A-2) are easily verified. For (A-3), let
$u_t=u$ sufficiently large such that
$N-\frac{\mu+\gamma}{\beta+u}>y$, then for the deterministic system
\begin{equation}\label{cost function}
\phi(t)=*+\int^t_0 \phi(s)(\beta
N-\mu-\gamma-\beta\phi(s))ds+\int^t_0\phi(s)(N-\phi(s))u(s)ds,
\end{equation}
there exists a $T>0$ such that $\phi(T)=y$ and
$\overline{V}_y\leq\frac{\int^T_0 u^2(s)ds}{2}<\infty$. Meanwhile,
for any $x_1, x_2$ sufficiently close to each other in the
neighborhood of $y$, there exists $T(\rho)$ such that
$\phi(T(\rho))=x_2$, \eqref{cost function} holds and $T(\rho)\to0$
as $\rho\to0$. When $x_1, x_2$ are sufficiently close to each other
in the neighborhood of $\rho$ or $*$, we can get the same results.
Then Assumption (A-4) holds.

Therefore, for any $y\in(*,N)$, Theorem \ref{Thm 5.7.11} implies
\begin{equation*}\label{auxiliary estimation}
\lim_{\sigma\to0}P^1_{\sigma,\rho}=\lim_{\sigma\to0}\mathbb{P}\left\{\tau^\sigma_y\wedge\tau^\sigma_\rho>e^{\frac{\overline{V}_y\wedge
\overline{V}_\rho+\delta}{\sigma^2}}\right\}=0.
\end{equation*}
Similarly,
\begin{equation*}
\lim_{\sigma\to0}P^3_{\sigma,\rho}=0.
\end{equation*}
What is left is the estimation of $P^2_{\sigma,\rho}$. In fact, by
Proposition \ref{proposition},
\begin{equation*}
\lim_{\rho\to0}\overline{V}_\rho=\lim_{\rho\to0}\overline{V}_{-\rho}=\infty.
\end{equation*}
Thus let $\rho$ be sufficiently small such that
$\overline{V}_\rho\wedge
\overline{V}_{-\rho}>\overline{V}_y+2\delta$, which implies
\begin{equation*}
\lim_{\sigma\to0}P^2_{\sigma,\rho}=0.
\end{equation*}
Therefore,
$\lim\limits_{\sigma\to0}\mathbb{P}\left\{\tau^\sigma_y>e^{\frac{\overline{V}_y\wedge
\overline{V}_\rho+\delta}{\sigma^2}}\right\}=0$.

Since $\overline{V}_y\wedge \overline{V}_\rho=\overline{V}_y$ for
sufficiently small $\rho$,
$$
\lim_{\sigma\to0}\mathbb{P}\left\{\tau^\sigma_y>e^{\frac{\overline{V}_y+\delta}{\sigma^2}}
\right\}
=\lim_{\sigma\to0}\mathbb{P}\left\{\tau^\sigma_y>e^{\frac{\overline{V}_y\wedge
\overline{V}_\rho+\delta}{\sigma^2}}\right\}=0.
$$
The proof of upper bound ends.

Now, we turn to the proof of the lower bound.
\begin{align*}
&\mathbb{P}\left\{\tau^\sigma_y\leq e^{\frac{\overline{V}_y\wedge
\overline{V}_\rho-\delta}{\sigma^2}} \right\}\\
&\leq\mathbb{P}\left\{\tau^\sigma_y\wedge\tau^\sigma_\rho\leq
e^{\frac{\overline{V}_y\wedge \overline{V}_\rho-\delta}{\sigma^2}}
\right\}+\mathbb{P}\left\{\tau^\sigma_\rho\wedge\tau^\sigma_{-\rho}\leq
e^{\frac{\overline{V}_y\wedge \overline{V}_\rho-\delta}{\sigma^2}}
\right\}\\
&\leq \mathbb{P}\left\{\tau^\sigma_y\wedge\tau^\sigma_\rho\leq
e^{\frac{\overline{V}_y\wedge \overline{V}_\rho-\delta}{\sigma^2}}
\right\}+\mathbb{P}\left\{e^{\frac{\overline{V}_\rho\wedge
\overline{V}_{-\rho}-\delta}{\sigma^2}}\leq\tau^\sigma_\rho\wedge\tau^\sigma_{-\rho}\leq
e^{\frac{\overline{V}_y\wedge \overline{V}_\rho\wedge
\overline{V}_{-\rho}-\delta}{\sigma^2}} \right\}
\\
&+\mathbb{P}\left\{\tau^\sigma_\rho\wedge\tau^\sigma_{-\rho}<
e^{\frac{\overline{V}_\rho\wedge \overline{V}_{-\rho}-\delta}{\sigma^2}} \right\}\\
&:=Q^1_{\sigma,\rho}+Q^2_{\sigma,\rho}+Q^3_{\sigma,\rho}.
\end{align*}
The lower bound can be proved in the same way.

Now, we turn to the proof of $0<\overline{V}(y)<\infty$. Since
$0<y<N$, let $u(t)\equiv u$ be sufficiently large such that
$N-\frac{\mu+\gamma}{\beta+u}>y$, then by (1.2) in \cite{Gray},
there exists $T>0$ such that $\phi(T)=y$, where $\phi(t)=x+\int^t_0
\phi(s)(\beta
N-\mu-\gamma-\beta\phi(s))ds+\int^t_0\phi(s)(N-\phi(s))u(s)ds$.
Thus, by the definition of $\overline{V}_y$,
$\overline{V}_y<\frac{u^2T}{2}<\infty$.

Let $*<\delta<y$, note that $\overline{V}_y\geq V(y-\delta,y)$. Then
$\overline{V}_y=0$ implies $V(y-\delta,y)=0$. Then there are two
sequences of $\{T_n,n\geq1\}$ and $\{\phi_n,\geq1\}$ such that
$I_{y-\delta,T_n}(\phi_n)\to0$, where $\phi_n(0)=y-\delta$,
$\phi_n(T_n)=y$,
$\frac{\int^{T_n}_0|u_n(t)|^2}{2}=I_{y-\delta,T_n}(\phi_n)\to0$, and
\begin{equation*}
\phi_n(t)=y-\delta+\int^t_0\phi_n(s)\left(\beta
N-\mu-\gamma-\beta\phi_n(s)ds\right)ds+\int^t_0\phi_n(s)\left(N-\phi_n(s)\right)u_n(s)ds
\end{equation*}
for all $t\in[0,T_n]$. Alike the proof of Proposition
\ref{proposition}, we could show that $T_n\to 0$. It is easy to
prove that $\phi_n(T_n)$ converges to $y-\delta$. This contradicts
the fact that $\phi_n(T_n)=y$. Therefore, $\overline{V}_y>0$ for
$*<y<N$.

(2) For any $0<y<*$, let $T_y=\inf\left\{t\geq0;I^0(t)=y\right\}$,
then $T_y<\infty$, and for any $\delta>0$, we also define
$d(\delta):=\min\left\{I^0(T_y+\delta)-y,
y-I^0(T_y-\delta)\right\}>0$ accordingly. Since the coefficients of
$I^\sigma(\cdot)$ and $I^0(\cdot)$ are of uniformly bounded Liptsiz,
there exists $M>0$ such that
\begin{equation*}
\begin{aligned}
G^\sigma_t\leq M\int^t_0G^\sigma_sds+\sigma\sup_{s\leq
t}\int^s_0\left|I^\sigma(s)\left(N-I^\sigma(s)\right)dB_s\right|,
\end{aligned}
\end{equation*}
where $G^\sigma_t=\sup\limits_{s\leq t}|I^\delta(s)-I^0(s)|$. Then
by Gronwall's inequality, there exists $M'>0$ such that
\begin{equation*}
G^\sigma_{T_y+\delta}\leq M'\sigma\sup_{s\leq
T_y+\delta}\int^s_0\left|I^\sigma(s)\left(N-I^\sigma(s)\right)dB_s\right|,~a.s.
\end{equation*}
Therefore, there are $M''>0$ and $M'''>0$ such that
\begin{equation*}
\begin{aligned}
\mathbb{P}\left\{\sup_{t\in[0,T_y+\delta]}|I^\sigma(t)-I^0(t)|>\frac{d(\delta)}{2}\right\}
&\leq\mathbb{P}\left\{\sigma\sup_{s\leq
T_y+\delta}\int^s_0\left|I^\sigma(s)\left(N-I^\sigma(s)\right)dB_s\right|>M''d(\delta)\right\}\\
&\leq
M'''\sigma^2E\int^{T_y+\delta}_0\left|I^\sigma(s)\left(N-I^\sigma(s)\right)\right|^2ds,
\end{aligned}
\end{equation*}
where the last inequality is derived by the B-D-G inequality for
continuous martingales.

Therefore,
\begin{equation*}
\begin{aligned}
&\lim_{\sigma\to0}\mathbb{P}\left\{\sup_{t\in[0,T_y+\delta]}|I^\sigma(t)-I^0(t)|>\frac{d(\delta)}{2}\right\}=0.
\end{aligned}
\end{equation*}
By the definition of $d(\delta)$ and $T_y$,
$\sup\limits_{t\in[0,T_y+\delta]}|I^\sigma(t)-I^0(t)|\leq\frac{d(\delta)}{2}$
implies that $T_y-\delta<\tau^\sigma_y<T_y+\delta$.

Hence,
\begin{equation*}
\begin{aligned}
\lim_{\sigma\to0}\mathbb{P}\left\{T_y-\delta<\tau^\sigma_y<T_y+\delta\right\}=1.
\end{aligned}
\end{equation*}

(3) Firstly, note that $T_y$ is increasing, then
$\lim\limits_{y\to*}T_y=\sup\limits_{y<*}T_y=:T_0$. If $T_0<\infty$,
by the definition of $T_y$ and the continuity of $I^0(\cdot)$,
$I^0(T_0)=*$, which contradicts the trajectory property of
$I^0(\cdot)$. Therefore, $\lim\limits_{y\to*}T_y=\infty.$

Since for any $y<*$, $\tau^\sigma_y<\tau^\sigma_*$ and
\begin{equation*}
\lim_{\sigma\to0}\mathbb{P}_x\left\{T_y-\delta<\tau^\sigma_y<T_y+\delta\right\}=1,
\end{equation*}
for any $M>0$, we have
\begin{equation}\label{tau M}
\lim_{\sigma\to0}\mathbb{P}_x\left\{\tau^\sigma_y>M\right\}=1,
\end{equation}
i.e., $\lim\limits_{\sigma\to0}\tau^\sigma_y=\infty$ in probability.

Next, We will show that $\lim\limits_{y\downarrow
*}\overline{V}_y=0$. Let $u_t=u$ sufficiently large such that
$N-\frac{\mu+\gamma}{\beta+u}>*$, then there exists a trajectory
$$
\phi(t)=*+\int^t_0\phi(s)\left[(\beta+u)
N-\mu-\gamma-(\beta+u)\phi(s)\right]dt
$$
and $\phi(T_y)=y$ such that $T_y\to0$ as $y\to*$. Therefore,
$\lim\limits_{y\downarrow
*}\overline{V}_y=0$.

For any $\delta>0$, let $y$ be sufficiently close to $y^*$ and
$y>y^*$ such that $\overline{V}_y\leq\frac{\delta}{2}$, then
\begin{equation}\label{tau 0 1}
\limsup_{\sigma\to0}\mathbb{P}\left\{\sigma^2\log
\tau^\sigma_*>\delta\right\}\leq\limsup_{\sigma\to0}\mathbb{P}\left\{\sigma^2\log
\tau^\sigma_y-\overline{V}_y>\frac{\delta}{2}\right\}=0,
\end{equation}
where the last inequality is derived by (1) in Theorem \ref{main
results}. Therefore, $\limsup\limits_{\sigma\to0}\sigma^2\log
\tau^\sigma_*\leq0$.

Let $x<y<*$, then $\tau^\sigma_*>\tau^\sigma_y$ and
$\lim\limits_{\sigma\to0}\tau^\sigma_y=T_y$, where $0<T_y<\infty$.
Therefore,
\begin{equation}\label{tau 0 2}
\begin{aligned}
&\limsup_{\sigma\to0}\mathbb{P}\left\{\sigma^2\log
\tau^\sigma_*<-\delta\right\}\\
&\leq\limsup_{\sigma\to0}\mathbb{P}\left\{\sigma^2\log
\tau^\sigma_y<-\delta,
\tau^\sigma_y\geq\frac{T_y}{2}\right\}+\limsup_{\sigma\to0}\mathbb{P}\left\{\tau^\sigma_y<\frac{T_y}{2}\right\}=0.
\end{aligned}
\end{equation}
where the last inequality is derived by \eqref{tau M} (2) in Theorem
\ref{main results}.

Therefore, for any $\delta>0$, \eqref{tau 0 1} and \eqref{tau 0 2}
implies $\limsup\limits_{\sigma\to0}\mathbb{P}\left\{|\sigma^2\log
\tau^\sigma_*|>\delta\right\}=0$.
\\

The rest proof of (4)-(6) is similar to (1)-(3), so we omit it.
Thus, the proof is completed.


\begin{thebibliography}{99}


\bibitem{AK}S. Aida, S. Kusuoka, D. Strook, On the support of Wiener
functionals. In: Elworthy, K.D., Ikeda, N.  (Eds.), Asymptotic
Problems in Probability Theory: Wiener Functionals and Asymptotic.
Pitman Research Notes in Mathematical Series, 284, Longman Scient.
Tech., 3-34, 1993.




\bibitem{BL}G. Ben Arous, R. LLeandre, D\'{e}croissance exponentielle du
noyau de la chaleur sur la diagonale (II).  Probab. Theory Related
Fields. 90 (1991) 377-402.





\bibitem{Day89}M.V. Day, Boundary local time and small parameter
exit problems with characteristic boundaries, SIAM J. Math. Anal.,
20 (1989) 222-248.

\bibitem{Day92}M.V. Day, Conditional exits for small noise diffusions with characteristic boundary, Ann. Probab.,
20 (1992) 1385-1419.


\bibitem{Dembo} A. Dembo, O. Zeitoini,
Large deviations techniques and applications, Second Edition,
Sringer, 1998.

\bibitem{Deuschel}J. Deuschel, D.W. Stroock, Large deviations,
Academic Press. Inc, 1989.


\bibitem{Duk87}P. Dupuis, H.J. Kushner, Stochastic systems with
small noise, analysis and simulation; a phase locked loop example,
SIAM J. APPl. Math., 47 (1987) 643-661.

\bibitem{Fre85b} M.I. Freidlin, Limit theorems for large deviations
and reaction-diffusion equations, Ann. Probab., 13 (1985) 639-675.

\bibitem{Gray}
A. Gray, D. Greenhalgh, L. Hu, X. Mao, J. Pan, A stochastic
differential equation SIS epidemic model, SIAM. J Appl Math. 71
(2011) 876-902.



\bibitem{Liu}
H. Liu, Q. Yang, D. Jiang, The asymptotic behavior of stochastically
perturbed DI SIR epidemic models with saturated incidences,
Automatica, 48:5 (2012) 820-825.


\bibitem{Norris}
J. Norris, Simplified Malliavin calculus. In: S\'{e}minaire de
probabilitiLes XX, Lecture Notes in Mathematics, 1204. Springer,
101-130, 1986.








\bibitem{SV}D.W. Stroock, S.R.S. Varadhan, On the support of di8usion processes
with applications to the  strong maximum principle. Proceedings of
the Sixth Berkeley Symposium on Mathematical Statistics  and
Probability, Vol. III. Univ. Cal. Press, Berkeley, 333-360, 1972.



\bibitem{Yang}
Q. Yang, X. Mao, Extinction and recurrence of multi-group SEIR
epidemic models with stochastic perturbations, Nonlinear Anal-Real.
14:3 (2013) 1434-1456.


\end{thebibliography}
\end{document}